\documentclass{amsart} 
\usepackage{amssymb} 
%
%
%
\theoremstyle{plain} 
\newtheorem{theorem}{\indent\sc Theorem}[section]
\newtheorem{lemma}[theorem]{\indent\sc Lemma}

\theoremstyle{definition} 

%

%


\begin{document}

\title{On the most expected number of components for random links} 

\author[Kazuhiro Ichihara]{Kazuhiro Ichihara$^*$} 

\author[Ken-ichi Yoshida]{Ken-ichi Yoshida$^\dagger$} 


\subjclass[2010]{ 
Primary 57M25; Secondary 20F36, 60G50.}
%
\keywords{ 
random link, random walk, braid.
}
\thanks{ 
$^{*}$Partially supported by JSPS KAKENHI Grant Number 26400100. 
$^\dagger$Partially supported by JSPS KAKENHI Grant Number 25400050. 
$^{*}$\,$^\dagger$Partially supported by Joint Research Grant of Institute of Natural Sciences at Nihon University for 2015. 
}
\address{
   	Department of Mathematics \endgraf
	College of Humanities and Sciences \endgraf
	Nihon University \endgraf
	3-25-40 Sakurajosui, Setagaya-ku, Tokyo 156-8550 \endgraf
	Japan
	}
\email{ichihara@math.chs.nihon-u.ac.jp}

\email{yoshida@math.chs.nihon-u.ac.jp}


\maketitle

\begin{abstract}
We consider a random link, 
which is defined as the closure of a braid 
obtained from a random walk on the braid group. 
For such a random link, 
the expected value for the number of components 
was calculated by Jiming Ma. 
In this paper, we determine 
the most expected number of components 
for a random link, 
and further, consider 
the most expected partition of the number of strings for a random braid. 
\end{abstract}

\section{Introduction} 

In \cite{Ma13}, from a probabilistic point of view, 
Jiming Ma introduced and studied two models of random links. 
We here consider the one which is defined as 
the braid closures of randomly chosen braids via random walks on the braid groups. 

Suppose that such a random walk 
on the braid group $\mathfrak{B}_n$ of $n$-strings 
induces the uniform distribution on 
the symmetric group $\mathfrak{S}_n$ on $n$ letters 
via the natural projection $\mathfrak{B}_n \to \mathfrak{S}_n$ ($n \ge 3$). 
Then, Ma showed in \cite[Theorem 1.1]{Ma13} that, 
for the random link coming from a random walk of $k$-step 
on $\mathfrak{B}_n$ ($n \ge 3$), 
the expected value of the number of components converges to 
\[ 1+ \frac{1}{2} + \frac{1}{3} + \cdots + \frac{1}{n} \]
when $k$ diverges to $\infty$. 
See the next section for the precise definition of the random link. 

From this result, it is natural to ask what is 
the most expected number of components for such a random link. 
We first answer this question as follows. 

\begin{theorem}\label{Thm1}
Consider a random link obtained from a random walk on $\mathfrak{B}_n$. 
Suppose that the random walk on $\mathfrak{B}_n$ 
is defined for the probability distribution on $\mathfrak{B}_n$ 
which induces the uniform distribution on $\mathfrak{S}_n$ 
via the natural projection $\mathfrak{B}_n \to \mathfrak{S}_n$ $(n \ge 3)$. 
Then the most expected number of components is equal to 
\[
K_n = \left[ \log(n+1) + \gamma -1
+ \frac{\zeta(2)-\zeta(3)}{\log(n+1)+ \gamma - 1.5} 
+ \frac{h}{(\log(n+1)+ \gamma - 1.5)^2} \right]
\]
where $[x]$ denotes the integer part of $x$, 
$\zeta$ is the Riemann zeta function, 
$\gamma = 0.5772\dots$ is the Euler-Mascheroni constant, 
and $h$ with $-1.1 < h < 1.5$ is a function on $n$, i.e, $h = h(n)$.
In particular, if $n >188$, it follows that 
\[ \left[\;\log n - \frac{1}{2}\;\right] < K_n < \left[\;\log n\;\right]\,. \]
\end{theorem}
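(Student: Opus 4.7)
The plan is to translate the topological question into a combinatorial one about cycles of uniform random permutations, exploit log-concavity of Stirling numbers of the first kind to localize the mode, and extract sharp asymptotics through Newton's identities. Since the random walk induces the uniform distribution on $\mathfrak{S}_n$ and the number of components of a braid closure equals the number of cycles of the image permutation, the problem becomes: find the mode of $k\mapsto c(n,k)/n!$, where $c(n,k)$ is the unsigned Stirling number of the first kind. The generating polynomial $\sum_k c(n,k)\,x^k = x(x+1)(x+2)\cdots(x+n-1)$ has only non-positive real roots, so by Newton's inequalities the sequence $\{c(n,k)\}_k$ is log-concave, and the mode $K_n$ is characterized as the unique integer at which the ratio $c(n,k+1)/c(n,k)$ crosses $1$.

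I would then pass to the reciprocals via $c(n,k) = (n-1)!\,E_{k-1}$ with $E_m := e_m(1,1/2,\dots,1/(n-1))$. Setting $R_m := E_m/E_{m-1}$ and applying Newton's identity gives
\[
m R_m \;=\; P_1 \;-\; \frac{P_2}{R_{m-1}} \;+\; \frac{P_3}{R_{m-1}R_{m-2}} \;-\; \cdots, \qquad P_j := \sum_{i=1}^{n-1}\frac{1}{i^j}.
\]
Near the mode every $R$ is close to $1$; the identical substitution $R=1$ collapses the alternating series via $\sum_{j\ge 1}(-1)^{j-1}/i^j = 1/(i+1)$ to $\sum_{j\ge 1}(-1)^{j-1}P_j = H_n - 1$, so the continuous mode satisfies $m^{\ast} \sim H_n - 1 \sim \log(n+1)+\gamma-1$, which is exactly the leading term of the theorem. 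To reach the next order I would write $R_{m^{\ast}-j} = 1+c_j$ with $c_0=0$ and $c_j>0$ increasing in $j$ (by log-concavity), plug into the recursion at $m^{\ast}$ and at nearby indices, and solve the resulting coupled system perturbatively; at leading order $c_j$ grows linearly in $j$ with slope of order $1/m^{\ast}$, whence
\[
m^{\ast} - (H_n - 1) \;=\; \frac{1}{m^{\ast}} \sum_{j\ge 1} j(-1)^{j-1} T_j \;+\; O\!\Big(\frac{1}{(m^{\ast})^2}\Big), \qquad T_j := \sum_{k\ge 1} \frac{1}{k^j(k+1)}.
\]
Exchanging summation order yields the clean identity $\sum_{j\ge 1} j(-1)^{j-1} T_j = \sum_{k\ge 1} k/(k+1)^3 = \zeta(2) - \zeta(3)$, producing the stated correction; the shift from $m^{\ast}$ to $\log(n+1)+\gamma-1.5$ in the denominator is absorbed in the continuum-to-integer passage together with the expansion $H_n = \log(n+1)+\gamma-1+O(1/n)$.

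Quantitative control of the next layer of the expansion, of the tails $P_j - \zeta(j)$, and of the discretization error then yields the uniform bound $-1.1 < h(n) < 1.5$ on the quadratic remainder. The final inclusion $[\log n - \tfrac{1}{2}] < K_n < [\log n]$ for $n>188$ is obtained by substituting the asymptotics into the explicit formula and verifying that the resulting value sits strictly inside $(\log n - \tfrac{1}{2}, \log n)$ once the $O(1/(\log n)^2)$ remainder is small enough, with the threshold $n=188$ emerging directly from this quantitative check. The principal obstacle is precisely this bookkeeping: one must track every contribution carefully enough to pin down the constants $1.5$ and $-1.1$, which forces delicate estimation of alternating sums of $T_j$'s, attention to the truncation $j \le m$ in the Newton recursion, and a finite numerical check for moderately sized $n$.
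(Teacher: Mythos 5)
Your reduction of the topological statement to combinatorics --- components of the braid closure correspond to cycles of the image permutation, the induced distribution on $\mathfrak{S}_n$ is uniform, so the limiting probability of $m$ components is $c(n,m)/n!$ and one must locate the mode of the unsigned Stirling numbers of the first kind --- is exactly the reduction the paper makes. Where you part ways is what happens next: the paper at that point simply cites Hammersley for the formula for $K_n$ with the stated bound on $h$, and Erd\H{o}s for the inequality $\left[\log n - \frac{1}{2}\right] < K_n < \left[\log n\right]$ when $n>188$, whereas you attempt to rederive both results. Your formal skeleton is sound and is in fact close in spirit to Hammersley's own method: the identity $c(n,k)=(n-1)!\,e_{k-1}(1,1/2,\dots,1/(n-1))$, log-concavity from the real-rootedness of $x(x+1)\cdots(x+n-1)$, the characterization of the mode by the ratio $c(n,k+1)/c(n,k)$ crossing $1$, the collapse $\sum_{j}(-1)^{j-1}P_j = H_n-1$ for the leading term, and the evaluation $\sum_j j(-1)^{j-1}T_j=\sum_k k/(k+1)^3=\zeta(2)-\zeta(3)$ for the correction all check out as formal computations.

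The gap is that the quantitative content of the theorem --- the uniform bounds $-1.1<h(n)<1.5$ and the threshold $n>188$ --- is precisely what you defer to ``bookkeeping,'' and it cannot be so deferred: it is the entire substance of the two cited papers (Hammersley's is a long and delicate piece of analysis, and Erd\H{o}s wrote a separate paper just to establish the second inequality). Two concrete places where your sketch needs real work rather than bookkeeping: (i) the interchanges of summation behind $\sum_j(-1)^{j-1}P_j=H_n-1$ and behind the $\zeta(2)-\zeta(3)$ identity both involve a divergent inner series at $i=1$ (respectively $k=1$); these only make sense because Newton's identity truncates at $j\le m$, and the resulting parity-dependent boundary terms must be controlled explicitly --- this is one of the genuinely delicate points of Hammersley's argument. (ii) The perturbative ansatz $R_{m^{\ast}-j}=1+c_j$ with $c_j$ growing linearly in $j$ requires an a priori two-sided quantitative bound on the $c_j$, uniform in $n$ and valid for all $j$ up to $m^{\ast}$, before the expansion can be solved to the stated accuracy; log-concavity gives you monotonicity of the ratios but not that bound. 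If you are content to quote Hammersley and Erd\H{o}s, as the paper does, your first paragraph already completes the proof; as a self-contained argument, the estimates above are missing and they are the hard part.
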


In fact, this can be obtained from 
some known results on Combinatorics and Analytic number theory.

To connect the problem of random link to them, 
the key is the correspondence between 
components of the closure of a braid 
and 
cycles in the cycle decomposition 
of the permutation corresponding to the braid. 
In particular, 
the number of components are calculated 
as the number of cycles. 

In view of this, we can relate 
random braids to random partitions of integers (i.e., the numbers of strings). 
Then it is also natural to ask what is 
the most expect partition of the number of strings for a random braid. 
About this question, against our naive intuition, we can show the following. 

\begin{theorem}\label{Thm2}
Consider a random braid obtained from a random walk on $\mathfrak{B}_n$. 
Suppose that the random walk on $\mathfrak{B}_n$ 
is defined for the probability distribution on $\mathfrak{B}_n$ 
which induces the uniform distribution on $\mathfrak{S}_n$ 
via the natural projection $\mathfrak{B}_n \to \mathfrak{S}_n$ $(n \ge 3)$. 
Then the most expected partition of the number of the strings is $( (n-1), 1) $. 
\end{theorem}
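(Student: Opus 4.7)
The plan is to reduce the statement to a combinatorial minimization over partitions of $n$ and then identify $(n-1,1)$ as the unique minimizer. The hypothesis that the step distribution on $\mathfrak{B}_n$ projects to the uniform measure $U$ on $\mathfrak{S}_n$, together with the fact that $U$ is idempotent under convolution, means that the underlying permutation of the $k$-step random braid is uniformly distributed on $\mathfrak{S}_n$ for every $k \geq 1$. The partition of the $n$ strings induced by the braid closure is exactly the cycle type of this permutation, so the probability that it equals a given $\lambda \vdash n$ with multiplicities $a_i$ is
\[
\frac{N(\lambda)}{n!} \;=\; \frac{1}{z_\lambda}, \qquad z_\lambda \;=\; \prod_{i=1}^{n} i^{a_i}\, a_i!,
\]
by the standard formula for the size of a conjugacy class in $\mathfrak{S}_n$. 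The most expected partition is therefore the $\lambda$ minimizing $z_\lambda$; direct computation gives $z_{(n-1,1)} = n-1$ and $z_{(n)} = n$, so it suffices to show $z_\lambda \geq n$ for every $\lambda \notin \{(n-1,1),(n)\}$.

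The main tool is the elementary inequality $k_1 k_2 \cdots k_r \geq k_1 + k_2 + \cdots + k_r$ for $r \geq 2$ and $k_j \geq 2$, proved by induction from the base case $(k_1 - 1)(k_2 - 1) \geq 1$. Applied to the sub-multiset of parts of $\lambda$ that are $\geq 2$, it controls the non-unit contribution to $z_\lambda$ in terms of the sum of the non-unit parts. The proof then splits by $a_1$, the number of fixed points: if $a_1 = 0$ and $\lambda \neq (n)$, the inequality gives $z_\lambda \geq \prod_j k_j \geq n$; if $a_1 = 1$ and $\lambda \neq (n-1,1)$, the non-unit parts form a partition of $n-1$ with at least two parts $\geq 2$, and the same inequality (sharpened slightly when the parts are not $(2,2)$) yields the bound; if $a_1 \geq 2$, the extra factor $a_1! \geq 2$ more than compensates in a short calculation based on the structure of the non-unit sub-partition.

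The main obstacle will be the borderline partitions in which the product–sum lemma is tight or nearly tight, namely those with small repeated parts such as $(2,2)$, $(2,2,1)$, or $(m, 1^{n-m})$. For these one must use the multiplicity factors $a_i!$ inside $z_\lambda$, which strictly exceed the bare product $\prod_j k_j$ whenever $\lambda$ has repeated parts, to close the residual gap. The argument requires no input beyond the conjugacy-class formula and elementary inequalities, but the bookkeeping must be done carefully to ensure that no partition of $n$ is overlooked.
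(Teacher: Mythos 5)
Your proposal is correct and follows essentially the same route as the paper: both reduce the problem to showing that the conjugacy class of an $(n-1)$-cycle is the unique largest one in $\mathfrak{S}_n$, i.e.\ that the centralizer order (your $z_\lambda$) is minimized uniquely at $\lambda=((n-1),1)$, and both rest on the product--sum inequality $k_1\cdots k_r\ge k_1+\cdots+k_r$ for parts $\ge 2$ together with a case split on the number of fixed points. The only cosmetic difference is that you use the exact formula $z_\lambda=\prod_i i^{a_i}a_i!$, whereas the paper bounds $|Z(a)|$ from below by exhibiting subgroups and must dispose of the borderline type $(2,2,1)$ in $\mathfrak{S}_5$ by a separate remark that your $a_i!$ factors absorb automatically.
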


Actually the probability for such a partition of the number of the strings is shown to converge to $1/(n-1)$. 

\bigskip

The first author thanks Jiming Ma for useful discussions in this topic, 
and also thanks Kazuma Shimomoto for letting him know about the Stirling number of the first kind.

\section{Link, braid and random walk}

We here give a brief review of the setting for studying the random links introduced in \cite{Ma13}. See \cite{Ma13} for details. 

Throughout the paper, 
we denote the braid group of $n$-strings by $\mathfrak{B}_n$, 
and the symmetric group on $n$ letters by $\mathfrak{S}_n$. 

We consider a probability distribution $\mu$ on $\mathfrak{B}_n$. 
By using such a probability distribution, 
one can define a random walk 
by setting the transition probability as $\mathbb{P} (x,y) = \mu (x y^{-1})$. 
Here we suppose that our random walk starts at the identity element at time zero. 

By considering the natural projection $\mathfrak{B}_n \to \mathfrak{S}_n$, 
such a random walk on $\mathfrak{B}_n$ induces 
a random walk on $\mathfrak{S}_n$. 
We here suppose that the probability distribution $\mu$ 
induces the uniform distribution on $\mathfrak{S}_n$ via the natural projection. 
Here, by the \textit{uniform distribution} on $\mathfrak{S}_n$, 
we mean the probability distribution satisfying 
$\mathbb{P} (s) = 1/n!$ holds for any $s \in \mathfrak{S}_n$. 
That is, we are assuming that the probability $\mathbb{P} (s)$ 
for any $s \in \mathfrak{S}_n$ 
induced from the random walk is sufficiently close to $1/n!$, 
or, in other words, 
the induced random walk on $\mathfrak{S}_n$ 
is the uniformly distributed random walk. 

Then, conceptually, we said a braid is a \textit{random braid} 
if it is represented by a braid coming from a random walk on $\mathfrak{B}_n$ with sufficiently long steps. 

We remark that our assumption on the probability distribution does not give severe restriction. 
Actually, Ma showed the following as \cite[Theorem 2.5]{Ma13}. 
Let $\mu$ be a probability distribution on $\mathfrak{B}_n$, 
which induces 
a random walk $\overline{ \omega_{n,k} }$ on $\mathfrak{S}_n$.
Suppose that 
the probability $\mathbb{P} ( \overline{ \omega_{n,1} } = e )$ is larger than 0, 
for the identity element $e \in \mathfrak{S}_n$, 
and the support of $\mu$ generates $\mathfrak{B}_n$. 
Then $\mu$ induces the uniform distribution on $\mathfrak{S}_n$. 

For example, 
the probability distribution $\mu_c$ on $\mathfrak{B}_n$ defined by
$$
\mu_c ( e) = \mu_c ( \sigma_i ) = \mu_c ( \sigma_i^{-1} ) 
= \frac{1}{2n-1} $$ 
for the identity element $e$ and each canonical generator 
$\sigma_i \in \mathfrak{B}_n$ ($1 \le i \le n-1$) 
is shown to satisfy the assumption.

Now we consider a random walk $\omega_{n,k}$ on $\mathfrak{B}_n$, 
and the probability $p_{n,k}^m $ for the link 
corresponding to the random walk $\omega_{n,k}$ 
which has exactly $m$ components. 
Then, for the random link, 
we say that \textit{the most expected number of components is $m$} 
if, for any sufficiently large $k$, $p_{n,k}^m$ is maximal among $p_{n,k}^j$ for $1 \le j \le n$.

\section{Most expected number of components}

In this section, we give a proof of Theorem \ref{Thm1}.

\begin{proof}[Proof of Theorem \ref{Thm1}]
Consider a random walk $\omega_{n,k}$ on $\mathfrak{B}_n$. 
By taking the braid closure of $\omega_{n,k}$, 
we have a link $\widehat{\omega_{n,k}}$ in the 3-sphere. 

Consider the natural projection $\pi : \mathfrak{B}_n \to \mathfrak{S}_n$. 
We see that a component of $\widehat{\omega_{n,k}}$ 
corresponds to an orbit of the action of $\pi(\omega_{n,k})$ on $n$ letters. 
It follows that, 
if we consider the decomposition of $\pi(\omega_{n,k})$ 
into cycles with mutually distinct letters, 
the number of components of $\widehat{\omega_{n,k}}$ 
is equal to the number of cycles in the decomposition of $\pi(\omega_{n,k})$. 

Now we are supposing that $\omega_{n,k}$ is defined by 
a probability distribution on $\mathfrak{B}_n$ 
which induces the uniform probability distribution on $\mathfrak{S}_n$ 
via the natural projection $\pi : \mathfrak{B}_n \to \mathfrak{S}_n$. 
This means that, for any $s \in \mathfrak{S}_n$, 
the probability $\mathbb{P} (s)$ defined by 
the induced random walk $\pi ( \omega_{n,k} )$ converges to $1/n!$. 

Let $p_{n,k}^m$ be the probability for the link $\widehat{\omega_{n,k}}$ 
corresponding to the random walk $\omega_{n,k}$ 
which has exactly $m$ components. 
It then follows that, as $k \to \infty$, $p_{n,k}^m$ converges to the ratio of 
the number of permutations with disjoint $m$ cycles in $\mathfrak{S}_n$. 

Here we note that 
the number of permutations of $n$ letters with disjoint $m$ cycles 
is called \textit{the Stirling number of the first kind}, denoted by $c(n,m)$. 
Consequently, 
to obtain the most expected number of components for $\widehat{\omega_{n,k}}$, 
it suffices to study the value of $m$ 
for which $c(n,m)$ is maximal for $1 \le m \le n$. 

This was already established by Hammersley in \cite{Hammersley} that 
$c(n,m)$ is maximal for $1 \le m \le n$ if $m$ is equal to 
$$
K_n = \left[ \log(n+1) + \gamma -1
+ \frac{\zeta(2)-\zeta(3)}{\log(n+1)+ \gamma - 1.5} 
+ \frac{h}{(\log(n+1)+ \gamma - 1.5)^2} \right]
$$
where $[x]$ denotes the integer part of $x$, 
$\zeta$ is the Riemann zeta function, 
$\gamma = 0.5772\dots$ is the Euler-Mascheroni constant, 
and $h$ with $-1.1 < h < 1.5$ is a function on $n$, i.e, $h = h(n)$.

Furthermore, if $n >188$, Erd\"os proved in \cite{Erdos} that 
$$ \left[ \log n - \frac{1}{2} \right] < K_n < \left[ \log n \right]$$
holds. 

This completes the proof of Theorem \ref{Thm1}. 

\end{proof}

\section{Partition of the number of strings for braid}

In this section, we give a proof of Theorem \ref{Thm2}. 
Before starting the proof, we should fix our terminology. 

An element of the symmetric group $\mathfrak{S}_n$ on $n$ letters 
is uniquely represented as a composition of several cycles with distinct letters. 
The set of the lengths of such cycles gives a partition of the integer $n$. 
That is, if an element of $\mathfrak{S}_n$ is represented as a composition of cycles of lengths $n_1, n_2, \cdots, n_m$ with $n_1 \ge n_2 \ge \cdots \ge n_m$, then we have a partition $(n_1, n_2, \cdots, n_m)$ of $n$, 
for $n = n_1 + n_2 + \cdots + n_m$ holds. 

In view of this, given a braid $\sigma$ with $n$-strings with $n > 0$, 
we define \textit{a partition of the number of strings} for $\sigma$ as a non-increasing sequence of positive integers $(n_1, n_2, \cdots , n_m)$ which is obtained in that way for the element $\pi (\sigma)$ of $\mathfrak{S}_n$, where $\pi$ denotes the natural projection $\mathfrak{B}_n \to \mathfrak{S}_n$. 

\medskip

We here prepare the following, 
which is the key algebraic lemma to prove Theorem \ref{Thm2}. 

\begin{lemma}
In the symmetric group on $n$ letters with $n \ge 3$, 
the conjugacy class of the maximal cardinality is the one containing the $(n-1)$-cycle $(1 \ 2 \ \dots \ n-1)$, and the cardinality is $n \cdot (n-2)!$. 
\end{lemma}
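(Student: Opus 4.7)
The plan is to invoke the standard cycle-type formula for the size of conjugacy classes in $\mathfrak{S}_n$. Recall that conjugacy classes are indexed by partitions of $n$ via cycle type, and for a partition $\lambda$ with $a_i$ parts of size $i$, the class has cardinality $n!/z_\lambda$, where $z_\lambda = \prod_{i \ge 1} i^{a_i} a_i!$ is the order of the centralizer of any element of that type. For $\lambda = (n-1,1)$ this yields $z_\lambda = (n-1) \cdot 1 \cdot 1! \cdot 1! = n-1$, which immediately gives the claimed cardinality $n!/(n-1) = n \cdot (n-2)!$.

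The substantive step is to show that every partition $\lambda \vdash n$ other than $(n-1,1)$ satisfies $z_\lambda \ge n$, and hence $z_\lambda > n-1$. I would organize the case analysis by writing $a_1$ for the multiplicity of fixed points. The extreme cases $\lambda = (n)$ and $\lambda = (1^n)$ are immediate, giving $z_\lambda = n$ and $z_\lambda = n!$ respectively. If $\lambda = (k, 1^{a_1})$ has exactly one non-unit part $k = n - a_1 \ge 2$ with $a_1 \ge 1$, then $\lambda \ne (n-1,1)$ forces $a_1 \ge 2$, and $z_\lambda = a_1!(n - a_1)$; the desired bound rearranges to $2 \cdot a_1! \ge a_1 + 2$, which holds for every $a_1 \ge 2$. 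In the remaining case there are $m' \ge 2$ non-unit parts summing to $N = n - a_1 \ge 4$, and the key sublemma is that $\prod_j n_j \ge N$ for every such partition; this follows by observing that the product is minimized at the extremal partition $(N - 2(m'-1), 2, \ldots, 2)$, where it equals $2^{m'-1}(N - 2m' + 2)$, and verifying that this dominates $N$ via the elementary inequality $2^{m'-1} \ge m'$.

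The main obstacle lies in the delicate $a_1 = 1$ branch of the last case: then $z_\lambda = \prod_j n_j \cdot \prod_{i \ge 2} a_i!$, and the sublemma only gives $\prod_j n_j \ge n - 1$, which alone is insufficient. However, tightness $\prod_j n_j = n - 1$ forces the unique configuration $\lambda' = (2,2)$ and hence $n = 5$; in this single exceptional configuration the repeated part contributes an extra factor $a_2! = 2$, yielding $z_\lambda = 4 \cdot 2 = 8 \ge n$. In every other sub-case either $\prod_j n_j \ge n$ directly, or, when $a_1 \ge 2$, the factor $a_1!$ together with $\prod_j n_j \ge N$ supplies $z_\lambda \ge a_1!(n - a_1) \ge n$ via the elementary inequality $4 \cdot a_1! \ge a_1 + 4$ for $a_1 \ge 2$, completing the argument.
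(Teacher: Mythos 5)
Your proof is correct and follows essentially the same route as the paper: both reduce the problem to showing that every cycle type other than $(n-1,1)$ has centralizer of order at least $n$, both rest on the elementary inequality that a product of integers $\ge 2$ dominates their sum, and both isolate the same near-equality case $(2,2,1)$ in $\mathfrak{S}_5$, resolved by an extra factor of $2$ in the centralizer. The only difference is cosmetic: you use the exact formula $z_\lambda=\prod_i i^{a_i}a_i!$, whereas the paper lower-bounds $|Z(a)|$ by exhibiting explicit abelian subgroups (including a long cycle on the fixed letters), which your $a_i!$ factors handle automatically.
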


\begin{proof}
Let $\mathfrak{S}_n$ be the symmetric group on $n$ letters $(n \ge 3)$. 

It is known that the cardinality of the conjugacy classes including $a \in \mathfrak{S}_n$ 
is given by $|\mathfrak{S}_n| / |Z(a)|$ (see \cite[Chapter 6, pp.198]{Ar} for example), 
where $Z(a)$ denotes the centralizer of $a$, that is, $\{ g \in \mathfrak{S}_n | ga=ag \}$.

Thus it suffice to show that $|Z(a)| \ge n-1$ for any element $a \in \mathfrak{S}_n$. 

We first claim that, in general, $k_1\cdots k_r \ge k_1+\cdots + k_r$ holds for a tuple of integers $k_1,...,k_r \ge 2$. 
This is easily shown by induction, and the equality holds only when $r=1$, or $r=2$ and $k_1=k_2=2$.

Now let us describe $a \in \mathfrak{S}_n$  by a product of cycles without common letters: 
for example, $a=a_1 \cdots a_r$ with $a_i$ is a $k_i$-cycle and $k_1 \ge k_2 \ge \cdots \ge k_r \ge 1$. 

Here we note that, if $k_r \ge 2$, then the centralizer $Z(a)$ contains the direct product of abelian groups generated by $a_1,\ldots,a_r$. 
Thus the order of $Z(a)$ is at least $k_1\cdots k_r$, which is greater than or equal to $k_1+\cdots + k_r=n$ by the above claim. 

If $k_{r-1} \ge 2, k_r =1$, then $Z(a)$ contains the direct product of abelian groups generated by $a_1,\ldots,a_{r-1}$, which has $k_1\cdots k_{r-1}$ elements. 
Again, by the above claim, the order of $Z(a)$ is at least $k_1+ \cdots + k_{r-1} = n-1$. 

Finally, if $k_p=\cdots =k_r =1$ for some $p$ with $2 \le p \le r-1$, then $Z(a)$ contains the direct product of abelian groups generated 
by $a_1,\ldots,a_{p-1}$ and the $(r-p+1)$-cycle of the other letters. 
The order of the cycle is at least $k_1\cdots k_{p-1}\cdot (r-p+1) \ge k_1+\cdots + k_{p-1}+(r-p+1)=n$.

Consequently we see that $|Z(a)| \ge n-1$. 

Furthermore, suppose that $|Z(a)|=n-1$ holds, then we have $r-1=1$, $k_1 \ge 2$, and $k_2=1$, 
that is, $a$ is the $(n-1)$-cycle, or $r-1=2, k_1=k_2=2,k_3=1,n-1=4$. 
In the latter case, i.e., $n=5$, we may assume that $a=(12)(34)$. 
But in this case, we have $|Z(a)|=8$ (the quaternion group is also contained), and so, the equality does not hold. 

\end{proof}

\begin{proof}[Proof of Theorem \ref{Thm2}]
Consider a random walk $\omega_{n,k}$ on $\mathfrak{B}_n$. 
By using the natural projection, 
we have the induced random walk on $\mathfrak{S}_n$. 
Since we have assumed that 
this induced random walk is uniformly distributed, 
the probability of a braid in the sequence with a given partition, 
say $(n_1, n_2, \cdots, n_m)$, of the number $n$ of the strings converges to $\Delta_{n,i} / n!$, 
where $\Delta_{n,i}$ denotes 
the number of elements in $\mathfrak{S}_n$ 
giving that partition of the integer $n$. 
This $\Delta_{n,i}$ is equal to the cardinality of the conjugacy class of an element in $\mathfrak{S}_n$ decomposed into the cycles of distinct letters of lengths $n_1, n_2, \cdots, n_m$. 
Then, by the above lemma, $\Delta_{n,i}$ takes maximum for the one containing the $(n-1)$-cycle $(1 \ 2 \ \dots \ n-1)$. 
That is, the most expected partition of the number of the strings for a random $n$-braid must be $( (n-1), 1) $. 
Also, since the maximum of $\Delta_{n,i}$ is $n \cdot (n-2)!$, 
the most expected probability is $n \cdot (n-2)! / n! = 1/(n-1)$. 
Furthermore, in that case, the link comes from the braid corresponding to the $(n-1)$-cycle $(1 \ 2 \ \dots \ n-1)$. 
\end{proof}

Actually, in the same way, it can be shown that the probability that a given random link becomes a knot converges to $1/n$.

%
%

\end{document}